\newcommand{\comments}[1]{} 
\def\Var{\mathop{\rm Var}\nolimits}
\newtheorem{proposition}{Proposition}[section]
\newtheorem{lemma}[proposition]{Lemma}
\newtheorem{theorem}[proposition]{Theorem}
\theoremstyle{remark}
\newtheorem{remark}[proposition]{Remark}
\theoremstyle{definition}
\numberwithin{equation}{section}
\begin{document}

\title[Logarithmic bounds on ergodic sums]{Logarithmic bounds for ergodic sums of\\ certain flows on the torus: a short proof}

\author{J{\'e}r{\^o}me Carrand}
\address{Laboratoire de Probabilit\'es, Statistiques et Mod\'elisation (LPSM),  
CNRS, Sorbonne Universit\'e, Universit\'e de Paris,
4, Place Jussieu, 75005 Paris, France}
\email{jcarrand@lpsm.paris}

\date{\today}
\begin{abstract}
We give a short proof that the ergodic sums of $\mathcal{C}^1$ observables for a $\mathcal{C}^1$ flow on $\mathbbm{T}^2$ admitting a closed transversal curve whose Poincar{\'e} map has constant type rotation number have growth deviating at most logarithmically from a linear one. 
For this, we relate the latter integral to the Birkhoff sum of a well-chosen observable on the circle and use the Denjoy-Koksma inequality. 
We also give an example of a nonminimal flow satisfying the above assumptions.
\end{abstract}
\thanks{I thank S. Ghazouani for allowing me to use his idea for the proof and Y. Coud{\`e}ne for many useful comments. Research supported by the European Research Council (ERC) under the European Union's Horizon 2020 research and innovation programme (grant agreement No 787304).}
\maketitle

\section{Introduction}

Since the work of Furstenberg \cite{furstenberg1973ue}, it is known that the classical horocycle flow of a compact surface of constant negative curvature is uniquely ergodic --- it has only one invariant Borel probability measure. This flow is related to a hyperbolic one, namely the geodesic flow, in the sense that the horocycle orbits are the unstable manifolds for the geodesic flow.

Using Symbolic Dynamics arguments (resp. equicontinuity of some functions), Marcus \cite{marcus1975axiomadiffeo} (resp. \cite{marcus1975axiomaflow}) generalized this result to the flow generated by the orientable one-dimensional unstable foliation of a connected basic piece of an Axiom A diffeomorphism (resp. flow). Later, Bowen and Marcus \cite{bowen1977ue} extended this result to the higher dimensional strong stable or strong unstable foliation of a basic set for an Axiom A diffeomorphism or flow.

In their pioneer work, Giulietti and Liverani \cite{GL2019} focused on the one-dimensional stable foliation of a $\mathcal{C}^r$ Anosov diffeomorphism $F$ of the two-torus, inducing a flow $h^t$ called the Giulietti--Liverani (stable horocycle) flow (of $F$). 
Giulietti and Liverani proved that this flow is uniquely ergodic, minimal and that it admits a closed transverse curve such that the rotation number of the first return map to this curve is of constant type. For more basic facts about this flow, see \cite[Appendix A]{baladi2019there}.

For any continuous function $f:\mathbbm{T}^2 \to \mathbbm{C}$, any $T>0$ and any $x \in \mathbbm{T}^2$, define the horocycle integral $H_{x,T}(f) = \int_{0}^T f(h^t(x)) \, \mathrm{d}t$. 
By unique ergodicity, we have for any such $x$ and $f$, \[\lim\limits_{T \to \infty} \frac{H_{x,T}(f)}{T} = \mu^s(f) \coloneqq \int_{\mathbbm{T}^2} f \, \mathrm{d}\mu^s,\] where $\mu^s$ is the unique invariant probability measure of the flow $h^t$.

For large enough $r$, Giulietti and Liverani introduce a transfer operator for $F$ on some suitable Banach space. Using eigenvectors of the dual operator associated to eigenvalues with modulus larger than the essential spectral radius (Ruelle resonances), they give an asymptotic expansion of $H_{x,T}(f)$ \cite[Theorem 2.8]{GL2019}. The dominant term is the term $T \mu^s(f)$, corresponding to the trivial resonance $\lambda_0 = e^{h_{top}}$, where $h_{top}$ is the topological entropy of $F$. This expansion also involves a negative power law error term. A simpler asymptotic expansion, in the case where all Ruelle resonances of the transfer operator have trivial Jordan blocks, can be found in \cite[Equation (1.2)]{baladi2019there}.

In their recent works, V.~Baladi \cite{baladi2019there} and G.~Forni \cite{forni2020equidistribution} independently proved that horocycle integrals (in the set-up from \cite{GL2019}) do not have deviations, in other words the expansion is limited to the linear term with a bounded remainder. Their proofs are quite different: V.~Baladi proves the strong result that the map $F$ does not have non-trivial Ruelle resonance, while G.~Forni uses the action of the (pseudo-)Anosov diffeomorphism on the first cohomology --- in the more general setting of surfaces of genus $g \geqslant 1$ (non-trivial Ruelle resonances can appear only for $g \geqslant 2$).

\vspace{1em}

In this note we give a new, much shorter, proof of the absence of deviations for horocycle integrals by considering a slightly more general setting: we no longer assume that the flow can be obtained from the stable foliation of an Anosov diffeomorphism. Instead, we only assume that the flow can be recovered from the suspension of a circle diffeomorphism whose rotation number is of constant type. In particular, these flows are uniquely ergodic. For clarity, we  call ``ergodic integral'' for this type of flows the quantity defined as ``horocycle integral'' previously.

We give an elementary proof that the ergodic integral of a $\mathcal{C}^1$ observable along the trajectory of such a flow on the two-torus grows at most logarithmically if the observable has zero average with respect to the unique invariant measure of the flow. This is the content of our main theorem (Theorem \ref{thm:_main}).

When comparing this estimate to the asymptotic expansion given by Giulietti and Liverani \cite[Theorem 2.8]{GL2019}, this result gives a new proof of the absence of deviations for the horocycle integral.

Finally, we prove that the class of flows we consider here is strictly larger than the class of flows studied by Giulietti and Liverani by constructing a flow satisfying our assumptions but which is not minimal --- in contrast to all flows in \cite{GL2019}. This is the content of Theorem \ref{thm:_nonminimal_flow}.

\section{Main result}

Given a flow $h_t$ on the two-torus, we call \emph{ergodic integral} of an observable $f : \mathbbm{T}^2 \to \mathbbm{R}$ at $x \in \mathbbm{T}^2$ and $T >0$ the quantity $H_{x,T}(f) \coloneqq \int_{0}^T f \circ h_t(x) \, \mathrm{d}t$.

Recall the following classical theorem --- we give a short proof of this fact using results from \cite{katok1997introduction} in order to introduce notations for our main result. In particular the theorem below gives a simple sufficient condition for a flow to be written as the suspension of a circle diffeomorphism.

\begin{theorem}\label{thm:_KH}
If $h_t$ is a $\mathcal{C}^1$ flow on the torus $\mathbbm{T}^2$ without critical points nor periodic orbits, then there exists a smooth closed curve $\gamma$ transverse to $h_t$ such that $h_t$ is smoothly conjugated to the suspension of the first return map $R: \gamma \to \gamma$.\\
Moreover, the flow $h_t$ is uniquely ergodic, with a unique invariant measure $\mu$.
\end{theorem}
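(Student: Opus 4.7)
The plan is to realize $h_t$ as a suspension flow over a circle and then transport unique ergodicity from the base map to the flow. The argument has four steps.

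\textbf{Step 1 (Global cross-section).} I would first invoke the classical result from \cite{katok1997introduction} that, because $h_t$ has no fixed points and no periodic orbits, its generating $\mathcal{C}^1$ vector field has a well-defined irrational asymptotic direction on $\mathbb{T}^2$, and any smooth simple closed curve $\gamma \subset \mathbb{T}^2$ whose homology class is not parallel to this direction is everywhere transverse to $h_t$ and is cut by every orbit.

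\textbf{Step 2 (Return map and return time).} By transversality of $\gamma$, compactness of $\mathbb{T}^2$ and the implicit function theorem, the first-return time $\tau : \gamma \to (0,\infty)$ is $\mathcal{C}^1$, bounded and bounded away from zero, and the first-return map $R : \gamma \to \gamma$ is a $\mathcal{C}^1$ diffeomorphism of the circle. The absence of periodic orbits forces the rotation number $\alpha$ of $R$ to be irrational.

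\textbf{Step 3 (Smooth conjugacy with a suspension).} I would then form the abstract suspension
\[
\Sigma_\tau = \{(x,s) : x \in \gamma, \; 0 \leqslant s \leqslant \tau(x)\} \big/ (x,\tau(x)) \sim (R(x),0)
\]
and consider $\Phi : \Sigma_\tau \to \mathbb{T}^2$ defined by $\Phi(x,s) = h_s(x)$. This map is smooth, is a local diffeomorphism by transversality of $\gamma$, and is bijective because every orbit meets $\gamma$ with first return time exactly $\tau$; hence it is a smooth conjugacy between the vertical translation flow on $\Sigma_\tau$ and $h_t$.

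\textbf{Step 4 (Unique ergodicity).} Any orientation-preserving circle homeomorphism with irrational rotation number is semi-conjugate to the rotation $R_\alpha$ and is therefore uniquely ergodic; call $\nu$ the unique $R$-invariant Borel probability measure on $\gamma$. The suspension of a uniquely ergodic base map under a continuous positive roof function is itself uniquely ergodic, with invariant probability measure proportional to $\mathrm{d}\nu \otimes \mathrm{d}t$ restricted to $\Sigma_\tau$; pushing this forward by $\Phi$ yields the unique $h_t$-invariant probability measure $\mu$ on $\mathbb{T}^2$.

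The hard part is Step 1: the existence of a smooth closed transverse curve cutting every orbit is where the topology of the torus and the no-closed-orbit hypothesis really enter. Once that is granted, the remaining steps are a routine combination of the implicit function theorem, the classical theory of circle homeomorphisms with irrational rotation number, and the standard suspension construction.
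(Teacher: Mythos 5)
Your overall architecture is exactly the paper's: produce a closed global cross-section $\gamma$, check that the first return map $R$ is a $\mathcal{C}^1$ circle diffeomorphism with irrational rotation number, conjugate $h_t$ to the suspension via $(x,s)\mapsto h_s(x)$, and transport unique ergodicity from $R$ to the flow through the product structure $\nu\otimes \mathrm{d}t$. Steps 2--4 are correct and match the paper's use of \cite[Corollary 14.2.3]{katok1997introduction} and \cite[Theorem 3.3.5]{cornfeld2012ergodic}.

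The problem is the mechanism you offer for Step 1, which you yourself identify as the crux. The claim that the flow has an ``irrational asymptotic direction'' and that \emph{any} smooth simple closed curve whose homology class is not parallel to this direction ``is everywhere transverse to $h_t$'' is false: transversality is a pointwise differential condition, not a homological one. Even for the linear flow of irrational slope, a curve isotopic to a meridian can be wiggled so that its tangent becomes parallel to the flow direction at some points; it then lies in the right homology class but is not a transversal. A curve in a suitable class is only \emph{isotopic} to a transversal, and producing that transversal is precisely the content of the result you are trying to invoke. The correct route --- and the one the paper takes --- is dynamical rather than homological: by the Birkhoff recurrence theorem the flow has a recurrent point, which by hypothesis is not periodic; one closes up a nearly returning orbit segment through it by a short transverse arc to obtain a smooth closed transversal, which is then automatically non-null-homotopic and met by every orbit (\cite[Propositions 14.2.1 and 14.2.3]{katok1997introduction}). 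So the conclusion you need is available in the reference you cite, but the justification you sketch for it would not go through; you should replace the homology-class criterion by the recurrence argument.
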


Recall that an irrational number is of constant type if the sequence $(a_k)_k$ of its coefficients in its continued fraction expansion is bounded. We can now state our main result, using notations from the previous theorem.

\begin{theorem}\label{thm:_main}
If $h_t$ is a $\mathcal{C}^1$ flow on the torus $\mathbbm{T}^2$ without critical point nor periodic orbit, and if the rotation number of the Poincar{\'e} first return map $R$ is of constant type, then there exist constants $K_1$ and $K_2$ such that for any $\mathcal{C}^1$ observable $f$ with $\int f \, \mathrm{d}\mu =0$, any $x$ and any $T>0$, \[ |H_{x,T}(f)| \leqslant K_1 ||f||_{\mathcal{C}^1} \log(1+ T) + K_2 ||f||_{\mathcal{C}^1}.\]
\end{theorem}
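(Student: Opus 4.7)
The plan is to pass from the flow to its Poincar\'e section via Theorem \ref{thm:_KH} and reduce the ergodic integral, up to bounded error, to a Birkhoff sum on the circle of an observable derived from $f$ by integration along the roof function. The constant-type hypothesis on the rotation number of $R$ then allows the Denjoy--Koksma inequality, combined with an Ostrowski expansion, to deliver the logarithmic bound.

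First, by Theorem \ref{thm:_KH} I may assume $h_t$ is the suspension over a circle diffeomorphism $R : \gamma \to \gamma$ with a positive $\mathcal{C}^1$ roof $\tau$, and that $d\mu = \left(\int \tau \, d\nu\right)^{-1} d\nu \otimes dt$, where $\nu$ is the $R$-invariant probability measure on $\gamma$. For $x \in \gamma$, let $n = n(x,T)$ be the largest integer with $\sum_{k=0}^{n-1} \tau(R^k x) \leq T$, and set
\[
F(y) \coloneqq \int_0^{\tau(y)} f(h_t(y)) \, \mathrm{d}t.
\]
Then
\[
H_{x,T}(f) = \sum_{k=0}^{n-1} F(R^k x) + E, \qquad |E| \leq \|f\|_\infty \|\tau\|_\infty.
\]
A routine computation using the $\mathcal{C}^1$ regularity of the flow and of $\tau$ shows $F$ is Lipschitz with $\mathrm{Var}(F) \leq C \|f\|_{\mathcal{C}^1}$, and Fubini together with $\int f \, d\mu = 0$ gives $\int F \, d\nu = 0$. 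A general $x \in \mathbbm{T}^2$ is flowed to $\gamma$ in bounded time, contributing only a further bounded term.

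Since the rotation number $\alpha$ of $R$ is of constant type, its partial quotients $(a_k)$ are bounded by some $K$, and the denominators $q_k$ of its convergents grow at least geometrically. Every $n \geq 1$ admits an Ostrowski expansion
\[
n = \sum_{i=0}^{m} b_i q_i, \qquad 0 \leq b_i \leq a_{i+1} \leq K,
\]
with $m = O(\log n)$. The orbit segment of length $n$ starting at $x$ decomposes accordingly into at most $K(m+1)$ consecutive Birkhoff blocks, each of length some $q_i$ and starting from an appropriate iterate $R^j x$. Applying Denjoy--Koksma on each block,
\[
\left| \sum_{k=0}^{q_i - 1} F(R^k y) - q_i \int F \, d\nu \right| \leq \mathrm{Var}(F),
\]
and summing, with $\int F \, d\nu = 0$, yields
\[
\left| \sum_{k=0}^{n-1} F(R^k x) \right| \leq K(m+1)\,\mathrm{Var}(F) \leq C' \|f\|_{\mathcal{C}^1} \log(1+n).
\]
Since $\tau$ is bounded away from $0$ and $\infty$, one has $n \asymp T$, hence $\log(1+n) = O(\log(1+T))$, and the theorem follows after adjusting constants.

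The step I expect to require the most care is the decomposition of the length-$n$ orbit segment into Birkhoff blocks of lengths $q_i$ starting from orbit points, so that Denjoy--Koksma applies block by block; the constant-type hypothesis is exactly what keeps the number of such blocks logarithmic in $n$. The remaining ingredients --- the suspension reduction, the Lipschitz bound on $F$, and the extension to $x \notin \gamma$ --- should follow from standard arguments.
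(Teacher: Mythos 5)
Your proposal is correct and follows essentially the same route as the paper: reduction to a suspension via the Poincar\'e section, the observable $F$ obtained by integrating $f$ along the roof (the paper's $g$), the zero-average verification via the product structure of the invariant measure, the Ostrowski-type decomposition of $n$ with digits bounded by the constant-type hypothesis (the paper's Lemma~\ref{decomposition_n}), and block-by-block application of Denjoy--Koksma. The only differences are notational and in the level of detail (the paper spells out the bounded-error comparison between $H_{x,T}(f)$ and the Birkhoff sum as Lemma~\ref{link_T_n}, and gives the explicit estimate $\mathrm{Var}(g)\leqslant C\|f\|_{\mathcal{C}^1}$), so no further comment is needed.
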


More precise versions of that estimate in the case of Giulietti--Liverani flows can be found in \cite{baladi2019there} and in \cite{forni2020equidistribution}. The bound obtained by V.Baladi \cite{baladi2019there} is much tighter --- but the proof is longer --- while the estimate given by G.Forni \cite{forni2020equidistribution} applies to flows on higher genus surfaces.

\begin{proof}[Proof of Theorem~\ref{thm:_KH}] By the Birkhoff recurrence theorem, any continuous transformation of a compact space has a recurrent point. Hence $h_1$ has recurrent orbits. In particular the flow $h_t$ also has recurrent points. By our assumptions on the flow, these orbits cannot be periodic. Hence, by \cite[Propositions 14.2.1 and 14.2.3]{katok1997introduction} there exists a smooth closed curve $\gamma$ \emph{transverse} to $h_t$ and parametrised by $\mathbbm{S}^1$ such that every orbit of $h_t$ intersects $\gamma$. We can therefore apply \cite[Corollary 14.2.3]{katok1997introduction} to get that $h_t$ is smoothly conjugated to the suspension flow  of the first return map $R$ to $\gamma$. 
The conjugation is $\mathcal{C}^1$, since the change of coordinates is $(\theta,t) \mapsto h_t (\theta)$.

The map $R: \mathbbm{S}^1 \to \mathbbm{S}^1$ is a $\mathcal{C}^1$ diffeomorphism of the circle which has no periodic point. It is a classical result --- see \cite[Theorem 3.3.5]{cornfeld2012ergodic} --- that $R$ is uniquely ergodic, with invariant measure $\nu$, and that its rotation number is irrational. From this, we deduce that $h_t$ is uniquely ergodic, with a unique invariant measure $\mu$.
\end{proof}

We can now give the proof of our main result.

\begin{proof}[Proof of Theorem~\ref{thm:_main}]
Suppose that the rotation number $\omega$ of $R$ is of constant type. In order to prove the estimate, we will compare the ergodic integral to the Birkhoff sum of an appropriate function. 

Let $u:\mathbbm{S}^1 \to \mathbbm{R}_+$ be the first return time function to $\gamma$, and let $f: \mathbbm{T}^2 \to \mathbbm{R}$ be a $\mathcal{C}^1$-observable such that $\int_{\mathbbm{T}^2} f \, \mathrm{d}\mu = 0$. By construction, $\gamma$ is a smooth curve, uniformly transverse to the flow, hence the function $u$ is of class $\mathcal{C}^1$. Define the $\mathcal{C}^1$ observable $g$ on $\gamma$ by the formula \[ g(x) = \int_{0}^{u(x)} f \circ  h_t(x) \, \mathrm{d}t. \]

To estimate the ergodic integral of $f$ by the Birkhoff sum of $g$ under the map $R$, we use the following lemma.

\begin{lemma}\label{link_T_n}
For all $x \in \gamma$ and $T>0$ there exists $n$ satisfying $\frac{T}{\sup (u)} -1 \leqslant n \leqslant \frac{T}{\inf (u)}$ and such that \[ \left| H_{x,T}(f) - \sum\limits_{k=0}^{n-1} g \circ R^k(x) \right| \leqslant \sup(u) \sup |f|.\]
For all $y \in \mathbbm{T}^2$ there is $0 \leqslant \tau < \sup u$ and $x \in \gamma$ such that $y=h_{\tau}(x)$ and 
\[ \left| H_{x,T+\tau}(f) - H_{y,T}(f) \right| \leqslant \sup(u) \sup |f|. \]
\end{lemma}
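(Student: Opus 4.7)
The core identity behind the first claim is a cocycle/telescoping one: writing $u_n(x) \coloneqq \sum_{j=0}^{n-1} u(R^j(x))$ for $x\in\gamma$, the definition of the first return map gives $R^k(x) = h_{u_k(x)}(x)$, so a change of variables $s = u_k(x) + t$ turns each summand into a piece of the ergodic integral:
\[
g\circ R^k(x) = \int_{0}^{u(R^k(x))} f\circ h_t(R^k(x))\,\mathrm{d}t = \int_{u_k(x)}^{u_{k+1}(x)} f\circ h_s(x)\,\mathrm{d}s.
\]
Summing over $k=0,\dots,n-1$ collapses the telescoping endpoints to
\[
\sum_{k=0}^{n-1} g\circ R^k(x) = \int_{0}^{u_n(x)} f\circ h_s(x)\,\mathrm{d}s = H_{x,u_n(x)}(f).
\]
This step is essentially bookkeeping; the only thing to keep in mind is that $\gamma$ is compact and $u$ is continuous and strictly positive, so $0 < \inf u \leqslant \sup u < \infty$.

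Given $T>0$, I would then choose $n = n(x,T)$ to be the largest integer with $u_n(x) \leqslant T$, so that $u_n(x) \leqslant T < u_{n+1}(x) = u_n(x) + u(R^n(x))$. This yields
\[
\bigl|H_{x,T}(f) - H_{x,u_n(x)}(f)\bigr| = \left|\int_{u_n(x)}^{T} f\circ h_s(x)\,\mathrm{d}s\right| \leqslant (T-u_n(x))\sup|f| \leqslant \sup(u)\sup|f|,
\]
which is the desired bound. For the range of $n$, the inequalities $n\inf(u) \leqslant u_n(x) \leqslant T$ and $T < u_{n+1}(x) \leqslant (n+1)\sup(u)$ give exactly $T/\sup(u) - 1 \leqslant n \leqslant T/\inf(u)$.

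For the second statement, I would start from the fact (already used in the proof of Theorem~\ref{thm:_KH}) that every orbit of $h_t$ meets $\gamma$, so for any $y\in\mathbbm{T}^2$ there exists a smallest $\tau\geqslant 0$ with $h_{-\tau}(y)\in\gamma$; set $x \coloneqq h_{-\tau}(y)$, which forces $0\leqslant \tau < \sup u$ since the next backward crossing of $x$ through $\gamma$ occurs at time $-u(R^{-1}(x))$. Then a shift of variable gives
\[
H_{y,T}(f) = \int_0^T f\circ h_{t+\tau}(x)\,\mathrm{d}t = \int_\tau^{T+\tau} f\circ h_s(x)\,\mathrm{d}s = H_{x,T+\tau}(f) - H_{x,\tau}(f),
\]
and the remaining piece $H_{x,\tau}(f)$ is bounded by $\tau\sup|f| \leqslant \sup(u)\sup|f|$, finishing the lemma. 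There is no real obstacle here; the only mild care needed is to invoke compactness of $\gamma$ and continuity of $u$ to ensure that $\sup u$ and $\inf u$ are finite and positive, and to handle the symmetric backward-time argument cleanly.
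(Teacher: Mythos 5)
Your proof is correct and follows essentially the same route as the paper's: choose $n$ so that the partial sums $u_n(x)=\sum_{k=0}^{n-1}u(R^k(x))$ bracket $T$, use the relation $h_t(R^n(x))=h_{t+u_n(x)}(x)$ to identify $\sum_{k<n}g\circ R^k(x)$ with $H_{x,u_n(x)}(f)$, and bound the leftover integral over an interval of length at most $\sup u$. Your write-up simply makes explicit the telescoping and the backward-crossing argument that the paper leaves implicit.
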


\begin{proof}
We first determine $n$. Since $\inf u > 0$, there exists $n$ such that $\sum\limits_{k=0}^{n-1} u \circ R^k(x) \leqslant T < \sum\limits_{k=0}^{n} u \circ R^k(x)$. Hence $n \inf u \leqslant T$ and $(n+1) \sup u \geqslant T$. Both estimates on ergodic integrals then follow from the fact that $h_t(R^n(x)) = h_{t + \sum_{k=0}^{n-1} u(R^k(x))}(x)$ for all $x \in \gamma$ and all $t \in \mathbbm{R}$, .
\end{proof}

In order to conclude by applying the Denjoy--Koksma theorem \cite[Theorem VI.3.1]{herman1979conjugaison}, we also need the following lemma.

\begin{lemma}\label{decomposition_n}
If $\omega = [0,a_1, \ldots, a_k,\ldots]$ is of constant type, then for any integer $n>1$ there exists integers $N$ and $(n_1,\ldots, n_N)$ such that $ n-1 = \sum\limits_{k=0}^{N} n_k q_k$, where $\tfrac{p_k}{q_k} = [0,a_1, \ldots, a_k]$. \\
Furthermore, we can choose $N < 4\log(n)/\log(2)$ and $n_k \leqslant B$ for all $k$, where $B$ is a bound on the coefficients $(a_k)_{k\geqslant 1}$. 
\end{lemma}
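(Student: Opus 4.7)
The plan is to apply the \emph{Ostrowski numeration} of $m := n-1$ with respect to the continued fraction of $\omega$. Recall that the denominators of the convergents satisfy the recurrence $q_{k+1} = a_{k+1} q_k + q_{k-1}$ with $q_{-1}=0$ and $q_0 = 1$, so in particular $q_k$ is strictly increasing for $k \geq 1$.

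The decomposition is produced by a greedy algorithm. Given $m \geq 1$, let $K$ be the largest index with $q_K \leq m$; set $n_K := \lfloor m / q_K \rfloor$ and replace $m$ by $m - n_K q_K < q_K$. Iterating this step yields indices $K > K' > K'' > \cdots \geq 0$ and coefficients $n_K, n_{K'}, n_{K''}, \ldots$; set $n_k = 0$ at every intermediate index. The process terminates (the sequence of indices decreases strictly) and produces a decomposition $n-1 = \sum_{k=0}^{N} n_k q_k$ with $N \leq K$.

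The first key observation is the coefficient bound. At each step, the maximality of $K$ gives $m < q_{K+1} = a_{K+1} q_K + q_{K-1}$, and since $q_{K-1} < q_K$ this yields $m < (a_{K+1}+1) q_K$, hence $n_K = \lfloor m/q_K \rfloor \leq a_{K+1} \leq B$. So $n_k \leq B$ for every $k$.

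The second key observation is the growth of the $q_k$, which gives the bound on $N$. From the recurrence and $a_k \geq 1$ we get $q_{k+2} \geq q_{k+1} + q_k \geq 2 q_k$, so by induction $q_{2j} \geq 2^j$ and $q_{2j+1} \geq 2^j$. Since $q_N \leq n-1 \leq n$, we get $2^{\lfloor N/2 \rfloor} \leq n$, hence $N \leq 2 \log_2(n) + 1 < 4 \log(n)/\log(2)$ for $n \geq 2$. There is no substantive obstacle here beyond checking these two elementary recursive estimates; the statement is essentially the Ostrowski representation theorem adapted to constant-type numbers.
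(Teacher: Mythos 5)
Your proposal is correct and follows essentially the same route as the paper: a greedy/Euclidean (Ostrowski) decomposition of $n-1$ in the basis $(q_k)$, the coefficient bound $n_k\leqslant a_{k+1}\leqslant B$ extracted from the recurrence $q_{k+1}=a_{k+1}q_k+q_{k-1}$, and the bound on $N$ from the exponential growth $q_k\geqslant 2^{\lfloor k/2\rfloor}$. The only differences are presentational (you skip the zero coefficients and prove the bound on $n_k$ directly rather than by contradiction), so nothing further is needed.
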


\begin{proof}
Since the sequence $(q_k)_{k \geqslant 0}$ satisfies the recursion formula $q_{k+1} = a_k q_k + q_{k-1}$ with $q_0=1$ and $q_1=a_1$, we get by induction that $2 ^{\frac{k-1}{2}} \leqslant q_k $. Therefore, there exists $N$ such that $q_{N} \leqslant n -1 < q_{N+1}$ with the estimate $N < 4\log(n)/\log(2)$.

Define inductively the sequences $(r_k)_{0 \leqslant k \leqslant N+1}$ and $(n_k)_{0 \leqslant k \leqslant N}$ by $r_{N+1} \coloneqq n-1$ and the Euclidean division $r_{k+1} = n_k q_k + r_k$, with $0 \leqslant r_k < q_k$. Clearly, we get that $n-1 = \sum\limits_{k=0}^{N} n_k q_k$ (because $q_0=1$). By contradiction, suppose there exists $k$ such that $n_k>B+1$. Then \[ r_{k+1} = n_k q_k + r_k > (B+1)q_k +r_k > a_{k+1}q_k + q_{k-1} +r_k = q_{k+1} + r_k .\]
Therefore $r_{k+1} \geqslant q_{k+1}$, which is a contradiction. Hence $n_k \leqslant B$ for all $k$.
\end{proof}

For completeness, we state the Denjoy--Koksma inequality:
\begin{theorem}[Denjoy--Koksma inequality]
Let $f$ be a homeomorphism of the circle with an irrational rotation number $\rho(f)$. Let $\mu$ be a measure invariant by $f$, and let $p/q$ be such that ${\rm gcd}(p,q)=1$ and $| q\rho(f) - p | < 1/q$. Then for all potential $\varphi$ of bounded variation and all $x \in \mathbbm{S}^1$, $\left| \sum\limits_{k=0}^{q-1} \varphi \circ f^k(x) - q \int \varphi \, \mathrm{d}\mu \right| < \mathrm{Var}(\varphi)$.
\end{theorem}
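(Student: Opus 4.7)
The plan has three steps: reduce the problem to a rotation via the Poincar\'e semi-conjugation, build a partition of $\mathbbm{S}^1$ into $q$ arcs of equal $\mu$-measure each containing exactly one iterate $f^k(x)$, and then exploit bounded variation to bound the deviation pointwise. Since $f$ has irrational rotation number $\rho = \rho(f)$, classical Poincar\'e theory produces a continuous non-decreasing degree-one map $h : \mathbbm{S}^1 \to \mathbbm{S}^1$ with $h \circ f = R_\rho \circ h$, where $R_\rho$ denotes the rigid rotation by $\rho$; normalising so that $h_\ast \mu = \mathrm{Leb}$, preimages of arcs under $h$ are arcs, and the $\mu$-measure of such a preimage equals the Lebesgue length of the corresponding arc.

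For the key geometric step I would write $\epsilon = \rho - p/q$, so that $|\epsilon| < 1/q^2$ from the hypothesis $|q\rho - p| < 1/q$, and observe that for $k = 0, \dots, q-1$ the rotation iterate $R_\rho^k(h(x)) = h(x) + kp/q + k\epsilon$ has ``ideal'' position $h(x) + j_k/q$ where $j_k = kp \bmod q$, plus a perturbation $k\epsilon$ of absolute value strictly less than $1/q$. Because $\gcd(p,q)=1$, the map $k \mapsto j_k$ is a bijection of $\{0, \dots, q-1\}$, so the ideal positions are precisely the vertices of an equipartition of the circle into $q$ arcs of length $1/q$. Since $\{k\epsilon : 0 \leq k \leq q-1\}$ is contained in an interval of length $(q-1)|\epsilon| < 1/q$, one can select an offset $\delta$ so that each arc $A_j = [h(x) + \delta + j/q, h(x) + \delta + (j+1)/q)$ contains exactly one orbit point. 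Pulling back via $h$ yields arcs $I_k$ (relabelled so that $f^k(x) \in I_k$) that partition $\mathbbm{S}^1$ with $\mu(I_k) = 1/q$.

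With this partition in hand, using $q\mu(I_k) = 1$ I would rewrite
\begin{align*}
\sum_{k=0}^{q-1} \varphi(f^k(x)) - q\int \varphi\, \mathrm{d}\mu
&= \sum_{k=0}^{q-1} q \int_{I_k} \bigl[\varphi(f^k(x)) - \varphi(y)\bigr]\, \mathrm{d}\mu(y);
\end{align*}
since $f^k(x), y \in I_k$, the integrand is bounded in absolute value by $\mathrm{Var}(\varphi \restriction I_k)$, so the total is bounded by $\sum_k \mathrm{Var}(\varphi \restriction I_k) \leq \mathrm{Var}(\varphi)$. The main obstacle is the geometric partition step: producing a shift of the equipartition so every iterate lands in a distinct arc uses both the Diophantine hypothesis $|q\rho - p| < 1/q$ and the coprimality $\gcd(p,q) = 1$ in an essential way. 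Minor care is also needed at boundary configurations (choose $\delta$ generically) and when $h$ is only a semi-conjugation (monotonicity suffices to pull back arcs), while the strict inequality is obtained either by a generic choice of $\delta$ or by a perturbation/continuity argument on $\varphi$.
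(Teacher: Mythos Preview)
The paper does not actually prove the Denjoy--Koksma inequality: it is stated ``for completeness'' with a citation to Herman \cite[Theorem~VI.3.1]{herman1979conjugaison}, and then simply applied. So there is no proof in the paper to compare against.

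Your sketch is essentially the classical argument (the one in Herman): pass to the rotation via the monotone semi-conjugation $h$ with $h_\ast\mu=\mathrm{Leb}$, use $|q\rho-p|<1/q$ together with $\gcd(p,q)=1$ to place the $q$ iterates $f^k(x)$ into $q$ arcs of $\mu$-measure $1/q$ each, and finish with the bounded-variation estimate. One small sharpening: you do not really need to hunt for a generic offset $\delta$. Taking the partition to start at $h(x)$ itself (the arcs $[h(x)+j/q,\,h(x)+(j+1)/q)$ when $\epsilon>0$, or the mirror-image half-open arcs when $\epsilon<0$) already works, since $y_k-h(x)\equiv j_k/q+k\epsilon$ with $0\le k|\epsilon|<(q-1)/q^2<1/q$; this avoids the vagueness about ``choosing $\delta$ generically'' and makes the strict inequality automatic unless $\varphi$ is constant on some arc.
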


Since $g$ is $\mathcal{C}^1$, it is of bounded variation. In addition, the denominators $(q_k)_{k \geqslant 0}$ associated to $\omega$ satisfy the assumption $|q_k \omega -p_k | < 1/q_k$ for some integer $p_k$ coprime with $q_k$. We can therefore apply the Denjoy--Koksma theorem to $g$, $R$ and any $q_k$. Furthermore notice that, by construction, $g$ is of $\nu$-average $0$: indeed, let $M=\{(x,t) \mid x \in \gamma, \, t\in [0,u(x)] \}/\sim$, with $(x,u(x))\sim (R(x),0)$, be the space such that $h_t$ is conjugated with its unit speed vertical flow. Let $\bar{\mu}$ be the image of $\mu$ by the conjugacy map. Thus, $\bar{\mu}$ is invariant by the vertical flow and so it must be of the form $\bar{\mu} = \frac{1}{\int u \,\mathrm{d}\bar{\nu}} \bar{\nu} \otimes \mathrm{d}t$, where $\bar{\nu}$ is invariant under $R$. By unique ergodicity of $R$, we have $\bar{\nu}=\nu$. Thus
\begin{align*}
0 &= \int_{\mathbbm{T}^2} f \,\mathrm{d}\mu = \int_{M} f(h_t(x)) \,\mathrm{d}\bar \mu(x,t) \\
&= \frac{1}{\int u \,\mathrm{d}\nu} \int_\gamma \int_0^{u(x)} f(h_t(x)) \,\mathrm{d}t \,\mathrm{d}\nu(x) = \frac{1}{\int u \,\mathrm{d}\nu} \int g \,\mathrm{d}\nu \, .
\end{align*} 

Fix $x \in \mathbbm{T}^2$ and $T > 0$. By Lemma \ref{link_T_n}, there exist a point $y \in \gamma$ and an integer $n$ from which we can estimate the ergodic integral of $f$ at $x$ and $T$ with the Birkhoff sum of $R$ at $y$. In order to assume that $n > 1$, we assume that $T > 2 \sup u$ (otherwise, the theorem holds with $K_1=0$ and some $K_2>0$ depending only on $u$). By Lemma \ref{decomposition_n} we can decompose $n-1$ as a sum from which we deduce the equality
\[ \sum\limits_{k=0}^{n-1} g \circ R^k(y) = \sum\limits_{l=0}^{N} \sum\limits_{m=0}^{n_l-1} \sum\limits_{k=0}^{q_l -1} g \circ R^k \left( R^{m q_l + \sum\limits_{i=0}^{l-1} n_i q_i} y \right). \]
From the Denjoy-Koksma inequality, for all $0 \leqslant l \leqslant N$, all $0 \leqslant m < n_l$ and all $y$ in $\gamma$,
\[ \left| \sum\limits_{k=0}^{q_l -1} g \circ R^k \left( R^{m q_l + \sum\limits_{i=0}^{l-1} n_i q_i} y \right) \right| < \Var(g), \]
we deduce the estimate
\[ \left| \sum\limits_{k=0}^{n-1} g \circ R^k(y) \right| \leqslant N B \Var(g) \leqslant \frac{4B \Var(g)}{\log 2} \log n  \leqslant \frac{4B \Var(g)}{\log 2} \log \frac{T}{\inf(u)} .\]
Hence the result, 
\begin{align*}
\left|H_{x,T} (f) \right| &\leqslant \left| H_{x,T}(f) - H_{y,T-\tau}(f) \right| + \left| H_{y,T-\tau}(f) - \sum\limits_{k=0}^{n-1} g\circ R^k(y) \right| + \left| \sum\limits_{k=0}^{n-1} g\circ R^k(y) \right|,\\
&\leqslant \frac{4B \Var(g)}{\log 2} \log \frac{T}{\inf(u)} + 2 \sup (u) \sup|f| \eqqcolon \tilde{K}_1 \log T + \tilde{K}_2.
\end{align*} 
We can bound the total variation $\Var(g)$ by the product of the length of $\gamma$ with $||g'||_{\mathcal{C}^0(\gamma)}$. By the definition of $g$, we get \[ ||g'||_{\mathcal{C}^0(\gamma)} \leqslant ||u'||_{\mathcal{C}^0(\gamma)} \, ||f||_{\mathcal{C}^0} + ||u||_{\mathcal{C}^0(\gamma)} ||\mathrm{d}f||_{\mathcal{C}^0} \sup\limits_{0 \leqslant t \leqslant ||u||_{\mathcal{C}^0(\gamma)}} ||\mathrm{d}h_t||_{\mathcal{C}^0}. \] Notice that $||u'||_{\mathcal{C}^0(\gamma)}$ and $\sup\limits_{0 \leqslant t \leqslant ||u||_{\mathcal{C}^0(\gamma}} ||\mathrm{d}h_t||_{\mathcal{C}^0}$ only depend on the flow $h_t$ and on $\gamma$. Hence there exist constants $K_1$ and $K_2$ that depend only on $h_t$ such that $\tilde{K}_1 \leqslant K_1 ||f||_{\mathcal{C}^1}$ and $\tilde{K}_2 \leqslant K_2 ||f||_{\mathcal{C}^1}$.
\end{proof}

Finally, remark that in order to get a rotation number of constant type, the condition for the flow not to have periodic orbit is necessary: otherwise the existence of a transverse curve $\gamma$ is no longer guaranteed. If such a curve exists then the first return map $R$ has a periodic point, hence has a rational rotation number.

\section{A nonminimal flow satisfying the assumptions of Theorem \ref{thm:_main}}

We finish this note by proving that the class of flows we are working with is strictly larger than the class of flows studied by Giulietti and Liverani which are necessarily minimal. The proof relies on constructing a family of $\mathcal{C}^1$ nonminimal flows. By \cite[Proposition~14.2.4]{katok1997introduction}, these flows are less than $\mathcal{C}^2$.

\begin{theorem}\label{thm:_nonminimal_flow}
There exists a flow on $\mathbbm{T}^2$ satisfying the assumptions of Theorem~\ref{thm:_main} that is not minimal. Furthermore, the flow can be chosen to be renormalized by an Axiom A diffeomorphism.
\end{theorem}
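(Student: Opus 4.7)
The plan is to produce the flow by suspending a Denjoy counterexample on the circle with a rotation number of constant type, and then, for the renormalization statement, to perform a Denjoy-type blow-up along a distinguished orbit of the linear stable horocycle flow of a linear Anosov diffeomorphism of $\mathbb{T}^2$.

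First I would fix an irrational $\omega$ of constant type (for concreteness the inverse of the golden mean) and invoke the classical Denjoy construction to produce a $\mathcal{C}^1$ diffeomorphism $R \colon \mathbb{S}^1 \to \mathbb{S}^1$ with rotation number $\omega$ and a wandering interval. Then I would form the suspension flow with constant roof function equal to $1$: this yields a $\mathcal{C}^1$ flow $h_t$ on $\mathbb{S}^1 \times \mathbb{R}/\sim \,\simeq\, \mathbb{T}^2$ whose Poincaré first return map on the natural transverse circle is exactly $R$. By construction, $h_t$ has no critical point and no periodic orbit, its return map has a rotation number of constant type, so Theorem~\ref{thm:_main} applies; on the other hand the orbit closure of any point in the wandering interval of $R$ gives a proper invariant closed set for $h_t$, so the flow is not minimal. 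This already proves the first half of the statement, and the regularity ceiling from \cite[Proposition~14.2.4]{katok1997introduction} is consistent with Denjoy's theorem.

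For the renormalization part, I would start from the linear Anosov diffeomorphism $F \colon \mathbb{T}^2 \to \mathbb{T}^2$ induced by the matrix $\begin{pmatrix} 2 & 1 \\ 1 & 1 \end{pmatrix}$, whose stable eigenvalue corresponds to a direction whose slope is a quadratic irrational of constant type. Let $h_t^{\mathrm{lin}}$ denote the linear unit-speed flow in the stable direction; it is smooth, minimal, and satisfies $F \circ h_t^{\mathrm{lin}} = h_{\lambda^{-1} t}^{\mathrm{lin}} \circ F$, where $\lambda$ is the unstable eigenvalue. The origin is a fixed point of $F$, hence its $h_t^{\mathrm{lin}}$-orbit $\mathcal{O}$ is $F$-invariant. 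I would now blow up $\mathcal{O}$: replace a thin transverse tubular neighbourhood of $\mathcal{O}$ by a slightly larger one, reparametrising so that the return map to a transverse circle becomes a Denjoy diffeomorphism $R$ whose wandering intervals are precisely the pieces coming from the blow-up of $\mathcal{O} \cap \gamma$. By choosing the lengths of the inserted wandering intervals according to the geometric Denjoy scheme, the resulting flow $\tilde h_t$ can be made $\mathcal{C}^1$ on $\mathbb{T}^2$, orbit-equivalent to $h_t^{\mathrm{lin}}$ through a semiconjugacy collapsing the wandering tube back to $\mathcal{O}$, and its return map has the same irrational rotation number as the linear one, still of constant type.

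Finally, to obtain the renormalization, I would arrange the Denjoy construction to be $F$-equivariant: since $F$ permutes the orbit $\mathcal{O}$ and contracts lengths in the stable direction by $\lambda^{-1}$, the successive wandering intervals introduced along $\mathcal{O}$ can be chosen to transform compatibly under $F$, so that $F$ lifts to a homeomorphism $\tilde F$ of the blown-up $\mathbb{T}^2$ satisfying $\tilde F \circ \tilde h_t = \tilde h_{\lambda^{-1} t} \circ \tilde F$. I expect the main obstacle to be precisely this step: verifying that one can choose the widths of the inserted intervals so that $\tilde F$ is not merely a homeomorphism but a $\mathcal{C}^1$ Axiom~A diffeomorphism of $\mathbb{T}^2$ (with the blown-up tube becoming a hyperbolic basic piece), while simultaneously keeping $\tilde h_t$ of class $\mathcal{C}^1$. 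Once this compatibility is ensured the theorem follows: $\tilde h_t$ meets all hypotheses of Theorem~\ref{thm:_main}, is non-minimal because of the wandering tube, and is renormalized by the Axiom~A diffeomorphism $\tilde F$.
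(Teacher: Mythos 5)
Your first half --- suspending a Denjoy counterexample with constant-type rotation number to get a non-minimal flow satisfying the hypotheses of Theorem~\ref{thm:_main} --- is correct and is exactly what the paper observes suffices when the renormalization condition is dropped. The issue is the second half, where you yourself flag the decisive step as an ``expected obstacle'' rather than proving it: you must choose the widths of the inserted wandering intervals so that the blown-up map $\tilde F$ is a genuine $\mathcal{C}^1$ Axiom~A diffeomorphism \emph{and} the blown-up flow $\tilde h_t$ is simultaneously generated by a $\mathcal{C}^1$ (or at least Lipschitz) vector field on $\mathbbm{T}^2$. This is not a routine verification: the $F$-equivariance forces the lengths of the intervals along the orbit $\mathcal{O}$ to scale like $\lambda^{-1}$ under $F$, which is in tension with the Denjoy summability condition controlling the $\mathcal{C}^1$ regularity of the return map, and the regularity of the two-dimensional surgery (not just of the circle map) is a separate, harder problem. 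As written, the renormalization claim is therefore not established.

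The paper resolves this by running the construction in the opposite direction. It starts from an explicit, already $\mathcal{C}^1$, derived-from-Anosov diffeomorphism $f_\beta$ (Coud\`ene's perturbation of the cat map), for which the stable vector field $v^s_\beta$ is given by an explicit convergent series and is known from \cite{carrand2020ARPE} to be $\mathcal{C}^1$ on all of $\mathbbm{T}^2$; the flow is then \emph{defined} as the flow of $v^s_\beta$, and the commutation relation $f_{\beta}\circ h_t = h_{\lambda^{-2}t}\circ f_{\beta}$ comes for free from the construction of $v^s_\beta$. The constant-type property of the rotation number is obtained not by a semiconjugacy built into a surgery, but by a homotopy in the parameter $\beta$ down to the linear case: continuity of $\beta\mapsto\rho(R^{(\beta)})$ together with the absence of periodic orbits for every $\beta$ forces $\rho(R)$ to equal the explicitly computable quadratic irrational $\alpha$ of the linear flow. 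Non-minimality then follows because the basin of attraction of the origin and its complement are flow-invariant. If you want to salvage your plan, the honest route is to either carry out the equivariant surgery in full (essentially reproving the existence of a $\mathcal{C}^1$ DA map with $\mathcal{C}^1$ stable foliation), or to replace it by an appeal to an explicit DA example as the paper does; the semiconjugacy collapsing the wandering tube that you envision does exist, but in the paper it appears a posteriori (in the appendix) as a way to compute the rotation number, not as the construction of the flow.
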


Notice however that all flows satisfying the assumptions of Theorem~\ref{thm:_main} are obtained by suspending circle diffeomorphisms of irrational rotation numbers, and thus are minimal on the support of their unique invariant measure.

Without the last condition of renormalization, we can simply construct such a flow by taking the suspension of a Denjoy counter-example whose rotation number is of constant type. Such circle diffeomorphisms exist by the original construction of Denjoy, which works for any irrational rotation number. For an expository on the construction of Denjoy counter-examples, see for example\footnote{I thank Selim Ghazouani for indicating me this reference.} \cite{athanassopoulos2015denjoy}. However, there is no reason for the flow obtained by suspending a Denjoy counter-example to be renormalized by an Axiom A diffeomorphism. Adding this condition, the flow falls into the category of $W^u$-flows studied by Marcus in \cite{marcus1975axiomadiffeo}, in the particular case where the phase space of the flow is the same as the one of the Axiom A map --- in opposition with just the set of nonwandering points of the map. Finally, results on Ruelle spectrum and dynamical determinants for Axiom A diffeomorphisms can be found in \cite{baladi2008dyndet, dang2021} (and results on dynamical zeta functions for Axiom A flows in \cite{dyatlov2018dynzeta}), but asymptotic expansions of ergodic integrals associated to $W^u$-flows using transfer operator techniques are still quite rare in literature and there is room for work to be done in this setting.

In order to build a flow satisfying this last condition, consider the derived from Anosov transformation on the two-torus studied in \cite[Chapter 9]{coudene2016attractor} and \cite{coudene2006pictures}. Recall some notation. Starting from Arnold's \emph{cat map} (case $\beta =0)$ in the diagonalized form, and adding a bump in the unstable direction, let $f_{\beta} : \left[ -\tfrac{1}{2}, \tfrac{1}{2} \right]^2 \to \mathbbm{R}^2$ be as follows \[ f_{\beta} \begin{pmatrix} x \\ y \end{pmatrix} \coloneqq \frac{1}{1+\lambda^2} \begin{pmatrix} \lambda & -1 \\ 1 & \lambda \end{pmatrix} \begin{pmatrix}
\lambda^2 + \beta k\left( \frac{\sqrt{x^2+y^2}}{2} \right) & 0 \\ 0 & \lambda^{-2} \end{pmatrix} \begin{pmatrix} \lambda & 1 \\ -1 & \lambda \end{pmatrix} \begin{pmatrix} x \\ y \end{pmatrix}, \]
where $\lambda = \frac{1+\sqrt{5}}{2}$, $ - \lambda^2 < \beta < 0 $ and $k$ is an even, unimodal function supported in $[-1,1]$ such that $k(0)=1$ -- \emph{e.g.} $k(r)=(1-r^2)^2 \mathbbm{1}_{[-1,1]}(r)$ -- so that the map $f_{\beta}$ is invariant by the action of $\mathbbm{Z}^2$ and induces a map, also called $f_{\beta}$, on the torus $\mathbbm{T}^2$. It is shown in \cite[Chapter 9]{coudene2016attractor} that $f_{\beta}$ is a diffeomorphism of class $C^1$ of the torus and if $ - \lambda^2 < \beta < - \lambda^2 +1 $ then the origin is an attractive hyperbolic fixed point. Let $K_\beta$ be the invariant subset defined as the complement of the basin of attraction of $0$. This map is an explicit example of Smale's derived from Anosov transformation as introduced in \cite[Section I.9]{smale1967differentiable}, here obtained by perturbing Arnold's \emph{cat map}.

Let $e_u = \frac{1}{\sqrt{1 + \lambda^2}} \begin{pmatrix}
\lambda \\ 1
\end{pmatrix}$ and $e_s = \frac{1}{\sqrt{1 + \lambda^2}} \begin{pmatrix}
-1 \\ \lambda
\end{pmatrix}$ be unitary eigenvectors of the matrix $A \coloneqq \begin{pmatrix}
2 & 1 \\ 1 & 1
\end{pmatrix}$ respectively associated to eigenvalues $\lambda^2$ and $\lambda^{-2}$. Since $A$ is symmetric, notice that $(e_u,e_s)$ is an orthonormal basis. In this basis the Jacobian matrix of $f_{\beta}$ is
\begin{align*}
\mathrm{Jac}(f_{\beta})(x) = \begin{pmatrix}
a_{\beta}(x) & b_{\beta}(x) \\ 0 & \lambda^{-2}
\end{pmatrix}.
\end{align*}
Since the Jacobian is upper-triangular, lines spanned by $e_u$ are stable by $f_\beta$. Assuming that $k$ satisfies also $k + id \, k' \leqslant 1$, $f_\beta |_{K_\beta}$ expands uniformly the direction spanned by $e_u$. In order to construct a stable foliation over $K_\beta$, for $X$ a vector field, denote $(f_\beta)_*X(x) = (\mathrm{d}_x f_\beta)^{-1} X(f(x))$ to be the pullback of $X$ by $f_\beta$. Formally, if $v^s_{\beta} = \lim\limits_{n \to + \infty} \lambda^{-2n} (f_\beta)^n_* X$, then $\lambda^{-2}(f_\beta)_*v^s_\beta = v^s_\beta$, or in other words $\mathrm{d}_x f_\beta \, v^s_\beta(x) = \lambda^{-2} v^s_\beta(f(x))$, $v^s_\beta$ is uniformly contracted by $\mathrm{d}f_\beta$. For the constant vector field $X \equiv e_s$, formally we get
\begin{align}\label{eq:vs}
v_{\beta}^s(x) = e_s - \sum\limits_{i=0}^{\infty} \lambda^{-2i} b_{\beta}(f_\beta^{i}(x)) \prod\limits_{j=0}^i \frac{1}{a_{\beta}(f_\beta^j(x))} \ e_u, \quad x \in \mathbbm{T}^2.
\end{align}
This equation being only formal, we need to check that the series inside it converges. Since $b_\beta$ is bounded and $a_\beta > 1$ on the compact set $K_\beta$, \eqref{eq:vs} defines a vector field on $K_\beta$, uniformly contracted by $f_\beta$:
\begin{align}\label{eq:uniform_contraction_vs}
\mathrm{d}_x f_\beta \, v^s_\beta(x) = \lambda^{-2} v^s_\beta(f_\beta(x))
\end{align}
for all $x \in K_\beta$. It is shown in \cite[Theorems 3.3 and 3.6]{carrand2020ARPE} --- in a slightly more general context --- that \eqref{eq:vs} defines a Lipschitz continuous vector field on $\mathbbm{T}^2$ for any fixed $\beta$ in $] -\lambda^{2} + \lambda^{-4}, 0]$ and that the map $(x,\beta) \mapsto v_{\beta}^s(x)$ is continuous on $\mathbbm{T}^2 \times \, ] -\lambda^{2} + \lambda^{-4}, 0]$. Let $h_t$ be the flow generated by $v_{\beta_0}^s$ for some fixed $- \lambda^2 + \lambda^{-4} < \beta_0 < - \lambda^2 +1 $. In fact, if we choose for the function $k$ any $\mathcal{C}^2$ unimodal and even function supported in $[-1,1]$, equal to $1$ at $0$ and satisfying $k+id \, k' \leqslant 1$, the induced vector fields $v_\beta^s$ enjoys the same properties as before, but they are also $\mathcal{C}^1$ -- see the discussion in \cite[Theorem 3.7]{carrand2020ARPE} -- hence the flow $h_t$ is also $\mathcal{C}^1$. We make such a choice for $k$.
We claim that this flow $h_t$ satisfies the condition of Theorem \ref{thm:_main} and that it is not minimal.

In order to prove this result, we first construct a closed transversal curve $\gamma$. We then construct a particular homotopy between the first return map and a rigid rotation, where none of the in-between map has a periodic point. From the continuity of the rotation number, it is enough to compute the rotation number of the rigid rotation, which happens to be a quadratic integer. The nonminimality follows from the invariance of the proper closed set $K_{\beta_0}$ by the flow $h_t$. First we need the following lemma.

\begin{lemma}\label{no_periodic_point}
The flow $h_t$ does not have periodic orbit. This is also true for the flow generated by $v^s_{\beta}$ for any $- \lambda^2 + \lambda^{-4} < \beta \leqslant 0 $.
\end{lemma}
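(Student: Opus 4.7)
The plan is to exploit the semi-conjugacy relation $\mathrm{d} f_\beta \cdot v^s_\beta = \lambda^{-2} v^s_\beta \circ f_\beta$ furnished by \eqref{eq:uniform_contraction_vs}, which intertwines the $f_\beta$-dynamics with the $h_t$-dynamics: a single periodic orbit of $h_t$ would then produce, under iteration of $f_\beta$, periodic orbits of arbitrarily small period, which is incompatible with compactness of $\mathbbm{T}^2$.

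First I would verify that $v^s_\beta$ has no zero. From the explicit expansion \eqref{eq:vs}, the $e_s$-component of $v^s_\beta(x)$ is identically equal to $1$, so $v^s_\beta$ never vanishes on $\mathbbm{T}^2$. Consequently $h_t$ has no fixed point, and a standard flow-box argument together with compactness of $\mathbbm{T}^2$ then yields a uniform constant $\delta > 0$ such that every periodic orbit of $h_t$ has period at least $\delta$ (cover $\mathbbm{T}^2$ by finitely many flow boxes, in each of which an orbit can only return after a controlled positive time).

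Next, I would derive the reparametrization identity. Differentiating $\psi(t) \coloneqq f_\beta(h_t(x))$ and using \eqref{eq:uniform_contraction_vs} gives $\psi'(t) = \mathrm{d}_{h_t(x)} f_\beta \cdot v^s_\beta(h_t(x)) = \lambda^{-2} v^s_\beta(\psi(t))$, and by uniqueness of ODE solutions this forces $f_\beta(h_t(x)) = h_{\lambda^{-2} t}(f_\beta(x))$ for every $x \in \mathbbm{T}^2$ and $t \in \real$. If $h_T(x) = x$ with $T > 0$, the identity yields $h_{\lambda^{-2} T}(f_\beta(x)) = f_\beta(x)$, and iterating $n$ times the point $f_\beta^n(x)$ lies on a periodic orbit of period at most $\lambda^{-2n} T$. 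Since $\lambda^{-2n} T \to 0$, this contradicts the lower bound $\delta$ and proves that no such $T$ can exist.

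The one delicate point I expect is that \eqref{eq:uniform_contraction_vs} is stated only on $K_\beta$, whereas a putative periodic orbit of $h_t$ need not lie in $K_\beta$. I would argue that the identity actually holds pointwise on all of $\mathbbm{T}^2$: since \eqref{eq:vs} defines $v^s_\beta$ as a convergent series on the entire torus by the cited results from \cite{carrand2020ARPE}, the formal derivation of the semi-conjugacy --- which only uses convergence of $v^s_\beta = \lim_n \lambda^{-2n} (f_\beta)^n_* e_s$ together with $\mathrm{d} f_\beta \cdot (f_\beta)^n_* e_s = (f_\beta)^{n-1}_* e_s$ --- is valid pointwise wherever $v^s_\beta$ is defined, and in particular at every candidate periodic point.
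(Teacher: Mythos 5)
Your proof is correct and follows essentially the same route as the paper: derive the commutation relation $f_{\beta}\circ h_t = h_{\lambda^{-2}t}\circ f_{\beta}$ from \eqref{eq:uniform_contraction_vs} via uniqueness of solutions to the Cauchy problem, then iterate $f_{\beta}$ to produce arbitrarily short periodic orbits. The only (harmless) difference is the final contradiction: you invoke a generic flow-box lower bound on periods of a nonvanishing vector field, whereas the paper uses the specific fact that the $e_s$-component of $v^s_{\beta}$ is identically $1$, so a very short (hence contractible) closed orbit would have zero net displacement along $e_s$ while accumulating positive displacement equal to its period.
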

\begin{proof}
By construction, each vector field $v^s_{\beta}$ satisfies $\mathrm{d}_x f_{\beta} (v^s_{\beta}(x)) = \lambda^{-2} v^s_{\beta}(f_{\beta}(x))$. By differentiating $f_{\beta_0} \circ h_t(x)$ and $h_{\lambda^{-2} t} \circ f_{\beta_0}(x)$ according to $t$, we get that these two functions satisfy the same Cauchy problem for all $x \in \mathbbm{T}^2$, thus the relation  
\begin{align}\label{eq:commut_flow_map}
f_{\beta_0} \circ h_t = h_{\lambda^{-2} t} \circ f_{\beta_0}
\end{align} holds by uniqueness of the solution (because $v^s_\beta$ is Lipschitz continuous). Therefore, if by contradiction $h_t$ has a periodic orbit, by applying $f_{\beta_0}^n$, for $n$ large enough, we get an arbitrarily short periodic orbit for the flow. This contradicts the fact that the component along $e_s$ in the basis $(e_u,e_s)$ of $v^s_{\beta_0}$ is constant equal to $1$.
\end{proof}

\begin{proof}[Proof of Theorem \ref{thm:_nonminimal_flow}]
Since the map $(x,\beta) \mapsto v^s_{\beta}(x)$ is continuous on the compact set $\mathbbm{T}^2 \times [\beta_0, 0]$, the component of these vector fields in the basis $(e_u,e_s)$ along $e_u$ is uniformly bounded and along $e_s$ is equal to $1$, by definition. Therefore, there exists a vector $w$ of rational slope, say $w = \frac{1}{\sqrt{p^2 + q^2}} \begin{pmatrix}
q \\ p
\end{pmatrix}$, where $p$ and $q$ are coprimes, so that $w$ is uniformly transverse to $v^s_\beta$ for all $\beta \in [\beta_0,0]$. Define $\gamma$ to be the closed curve passing through $(0,0)$ and with slope $p/q$. By choice of $w$, the curve $\gamma$ is transverse to $v^s_{\beta}$ and so for every $\beta$ in $[\beta_0, 0]$. We can naturally parametrize $\gamma$ by $\mathbbm{S}^1$.

Let $R : \mathbbm{S}^1 \to \mathbbm{S}^1$ be the first return map to $\gamma$ of $h_t$. Notice that performing a time change on this flow does not affect the first return map $R$, but only the first return time function $u$. In order to simplify computations, renormalize the vector fields as follows
\[ w^s_{\beta} = \frac{1}{ \left\langle v^s_{\beta}, w^{\perp} \right\rangle } v^s_{\beta} \]
so that, for each $\beta$, the flow $\phi^{(\beta)}_t$  generated by $w^s_{\beta}$ has a constant first return time function $u_{\beta} \equiv \tau_{\beta}$, where $w^{\perp}$ is the unitary vector equal to $w$ rotated by an angle $\pi/2$. These first return time functions do not depend on $\beta$, in other words $\tau_{\beta} \equiv \tau$. Since $b_0 \equiv 0$, notice that $w^s_0$ is a constant vector field (equals everywhere to $e_s$), hence its first return map to $\gamma$ is a rigid translation $R_{\alpha} : x \mapsto x + \alpha $. Introduce also the notation $R^{(\beta)}$ for the first return map to $\gamma$ of $\phi^{(\beta)}_t$. In particular $R= R^{(\beta_0)}$ and $R_\alpha = R^{(0)}$.

By \cite[Theorem 3.10]{carrand2020ARPE}, the map $\beta \mapsto v^s_{\beta}$ is continuous for the $\mathcal{C}^0$-topology on the space of vector fields. From a Gronwall type argument, we get that $\beta \mapsto R^{(\beta)}$ is continuous for the $\mathcal{C}^0$-topology. Now, by \cite[Proposition II.2.7]{herman1979conjugaison}, the map $\beta \mapsto \rho(R^{(\beta)})$ is continuous, where $\rho(R^{(\beta)})$ stands for the rotation number of $R^{(\beta)}$. In order to prove that $\rho(R)=\alpha$, we prove that $\rho(R^{(\beta)})$ cannot be rational, but this directly follows from Lemma \ref{no_periodic_point}. Hence $\beta \mapsto \rho(R^{(\beta)})$ is a constant map and $\rho(R)=\alpha$.

We now compute the value of $\alpha$. Consider lifts $\tilde{w}^s_{(0)}$, $\tilde{\gamma}$ and $\tilde{\phi}^{(0)}_t $ to $\mathbbm{R}^2$ of respectively $w^s_0$, $\gamma$ and $\phi^{(0)}_t$. Let $(\partial_x,\partial_y)$ be the canonical basis of $\mathbbm{R}^2$. Notice that the arc $\{ \tilde\phi^{(0)}_t ((0,1)) \mid -p \tau  \leqslant t \leqslant 0 \} $ starts at the point $(0,1)$ and ends on the branch of $\tilde{\gamma}$ containing $(0,0)$ at some point $c w$, for some $c>0$. The coordinates of this intersection point satisfy the system of equations
\begin{empheq}[left=\empheqlbrace]{align*}
-p \tau \left\langle w^s_{(0)}, \partial_x \right\rangle &= c q (p^2 + q^2)^{-1/2} \\
1 - p \tau \left\langle w^s_{(0)}, \partial_y \right\rangle &= c p (p^2 + q^2)^{-1/2},
\end{empheq}
where $\left\langle \cdot , \cdot \right\rangle$ denotes the usual scalar product. Now, notice that $c / |\gamma| = -p \alpha$, where $|\gamma|$ is the length of the closed curve $\gamma$. We can solve these equations for $\alpha$ and get 
\[\alpha = \frac{1}{pq} \frac{1}{\lambda - \frac{p}{q}} \]
which clearly is a quadratic integer, since $\lambda$ is. Therefore $\alpha$ is of constant type.

The nonminimality of $h_t$ is ensured by properties proven in \cite[Chapter 9]{coudene2016attractor}. More precisely, let $U$ be the basin of attraction of $(0,0)$ for $f_{\beta_0}$ and $K$ be its complement in the torus. In \cite[Chapter 9]{coudene2016attractor}, Coud{\`e}ne proved that the set $K$ is nonempty and that $U$ and $K$ are invariant by $f_{\beta_0}$. Now, because of (\ref{eq:commut_flow_map}), 
the sets $U$ and $K$ are invariant by the flow $h_t$.

Finally, the map $f$ is an Axiom A diffeomorphism since $f$ is transitive \cite[Chapter 9]{coudene2016attractor} on the hyperbolic set $K$ \cite[Theorem 2.9]{carrand2020ARPE}. Therefore, by the shadowing lemma, periodic points are dense in the compact invariant set $K$ which coincides with the nonwandering set of $f$.
\end{proof}

\begin{figure}[ht]
\begin{center}
\raisebox{-0.5\height}{\includegraphics[width=0.58\textwidth]{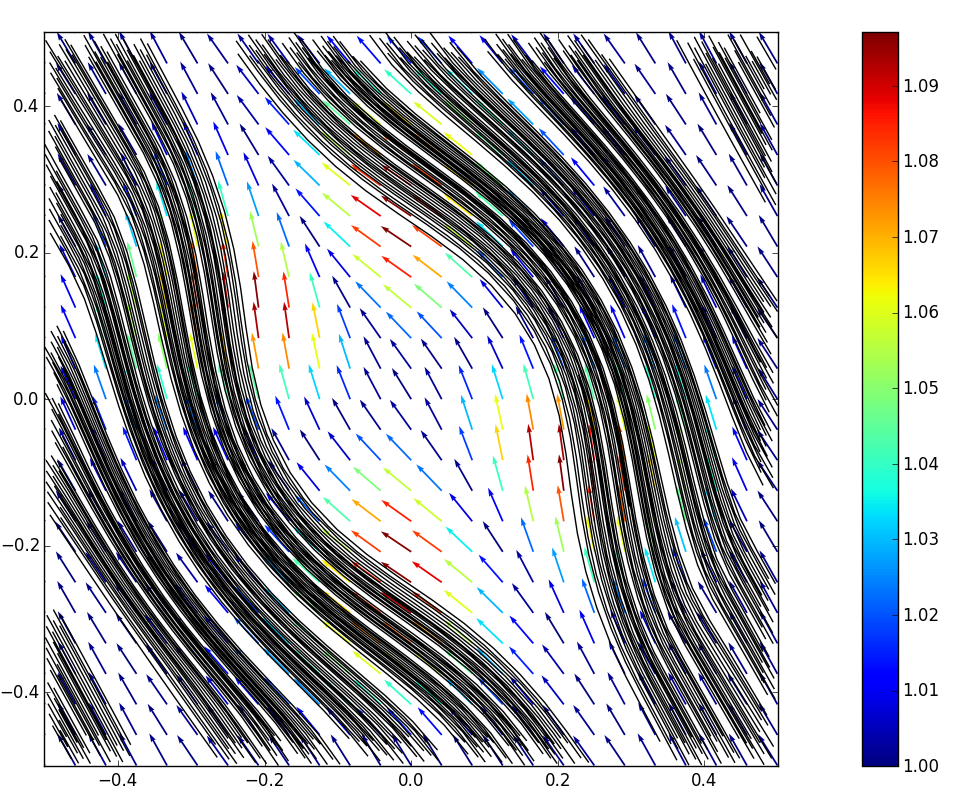}}
\end{center}
\caption{\label{fig:K_m1} Representation of the minimal component $K$ of the flow $(h_t)$. Underneath is the vector field $v^s$ generating the flow.}
\end{figure}

Finally, we give in Figure \ref{fig:K_m1} a representation of the set $K$. In \cite[Chapter 9]{coudene2016attractor}, it is proven that $K$ is the closure of the stable leaf $W^s(p)$ of a hyperbolic fixed point $p$ for $f_{\beta_0}$. From the relation (\ref{eq:commut_flow_map}) and the Hartman-Grobman theorem, it follows that this stable leaf is equal to the orbit of $p$ by the flow $h_t$. From \cite[Theorem 3.3.4]{cornfeld2012ergodic}, the set $K \cap \gamma$ coincides with any $\omega$-limit set and any $\alpha$-limit set of $R$. Therefore, the set $K$ is the minimal component of $h_t$ and is also an attractor for both positive and negative times. Moreover, $K$ is also the support of the unique invariant measure $\mu$ of $h_t$.

\appendix
\section{Alternative proof of Theorem \ref{thm:_nonminimal_flow} from semi-conjugacy}

We give an alternative proof of Theorem \ref{thm:_nonminimal_flow}. More precisely, we use the same example, but we compute the rotation number in a different way: we construct a semi-conjugacy map $h$ so that $h \circ R = R_{\alpha} \circ h$. It will follow that the rotation number of $R$ is $\alpha$. The construction of $h$ is inspired from the proof of \cite[Proposition 7]{yoccoz2005echanges}.

\begin{proof}
Exactly as in the first proof of Theorem \ref{thm:_nonminimal_flow}, we construct the closed transversal curve $\gamma$ and we renormalize the vector fields $v^s_{\beta}$ so that the time of first return function to $\gamma$ of their associated flows is constant. The computation of $\alpha$ remains the same, and we get that $\alpha$ is a quadratic integer, hence $\alpha$ is of constant type. In particular, the rotation $R_{\alpha}$ is minimal.

We now prove that the first return map $R$ of $h_t$ is semi-conjugated to $R_{\alpha}$. To this end, we construct a surjective and continuous function $h$ of the circle.

Let $h(R^n(0)) \coloneqq R_{\alpha}^n(0)$ for all $n \in \mathbbm{Z}$. This map is well defined since $h_t$ has no periodic orbit by Lemma \ref{no_periodic_point}, so does $R$. In order to extend $h$ into a continuous map, we first prove that it preserves order of triplets. Fix an orientation of $\mathbbm{S}^1$ --- and therefore of $\gamma$ --- seen as $\mathbbm{R}/\mathbbm{Z}$. Let $x_1 \coloneqq R^{n_1}(0)$, $x_2 \coloneqq R^{n_2}(0)$ and $x_3 \coloneqq R^{n_3}(0)$ be so that $(x_1,x_2,x_3)$ is an ordered triplet of $\mathbbm{S}^1$ --- we can assume that $n_1$, $n_2$ and $n_3$ are distinct. We prove that the triplet $(x_1',x_2',x_3')=(h(x_1),h(x_2),h(x_3))$ is also ordered. Consider the family of curves $\varphi_{\beta} \coloneqq \{ \phi^{(\beta)}_t(0) \mid \min(n_1,n_2,n_3)\tau \leqslant t \leqslant \max(n_1,n_2,n_3)\tau \}$. By continuity of $(x,\beta) \mapsto w^s_{\beta}(x)$, this family depends on $\beta$ in a continuous fashion. 

Notice that points $x_1$, $x_2$ and $x_3$ correspond to some intersection points between $\varphi_{\beta_0}$ and $\gamma$, and that points $x_1'$, $x_2'$, and $x_3'$ correspond to some intersection points between $\varphi_{0}$ and $\gamma$. Furthermore, we can connect $x_1$ to $x_1'$ (respectively $x_2$ to $x_2'$, and $x_3$ to $x_3'$) with intersection points between $\gamma$ and $\varphi_{\beta}$ when varying the value of $\beta$. 
Therefore we can track the evolution of $(x_1,x_2,x_3)$ with continuous functions $(x_1(\beta),x_2(\beta),x_3(\beta))$ of $\beta$ such that $x_1(\beta_0)=x_1$ and $x_1(0)=x_1'$ --- and similarly for $x_2(\beta)$ and $x_3(\beta)$.

By contradiction, suppose that the triplet $(x_1',x_2',x_3')$ is not ordered. By continuity, this means that for some value of $\beta_1$ in $[\beta_0,0]$ and without loss of generality $x_1(\beta_1)=x_2(\beta_1)$. In other words, this means that the first return map to $\gamma$ of $\phi^{(\beta_1)}_t$ has a periodic point, which contradicts Lemma \ref{no_periodic_point}.

Therefore, the map $h$ can be lifted into a ``degree'' one, increasing, function $\tilde{h} : \pi^{-1} \{R^n(0) \mid n \in \mathbbm{Z} \} \to \pi^{-1} \{R_{\alpha}^n(0) \mid n \in \mathbbm{Z} \}$, where $\pi : \mathbbm{R} \to \mathbbm{R}/\mathbbm{Z}$ is the canonical projection. In other words, $\pi \circ \tilde{h} = h \circ \pi$ and $\tilde{h}(x+1)-\tilde{h}(x) =1$ for all $x$ where $\tilde{h}$ is defined. By minimality of $R_{\alpha}$, the range of $\tilde{h}$ is dense in $\mathbbm{R}$. 
Hence, we can uniquely extend $\tilde{h}$ by a continuous, increasing and surjective function $\tilde{h} : \mathbbm{R} \to \mathbbm{R}$. Its projection on the circle, still noted $h$, is also continuous and extends $h$ into a degree one map of the circle. By continuity of $R$ and of $R_{\alpha}$, we get that $h \circ R = R_{\alpha} \circ h$. 
Therefore, by \cite[Proposition II.2.10]{herman1979conjugaison}, the rotation number of $R$ is $\alpha$, a quadratic integer.

The nonminimality of $h_t$ is ensured by properties proven in \cite[Chapter 9]{coudene2016attractor}.
\end{proof}

\begin{remark}
The construction of the conjugacy map $h$ comes from the following heuristic. Since the stable manifold of $0$ under the cat map is blown up into an open set, the basin of attraction $U_\beta \coloneqq \mathbbm{T}^2 \smallsetminus K_\beta$ of $0$ under $f_\beta$, we expect that the map $h$ relates the orbit of $0$ under $R_\alpha$ with the orbit of $I$ under $R$, where $I$ is the connected component of $\gamma \cap U_\beta$ containing $0$ (notice that $I$ is a wandering interval and that its orbit under $R$ is $\gamma \cap U_\beta$, which is dense in $\gamma$). More precisely, we expect $h$ to be similar to the Cantor staircase function, being constant when restricted to each $R^n(I)$. As in the construction of the Cantor staircase function, we only need to know the values of $h$ where it is constant, as long as $h$ is non-decreasing and that this set of values has a connected closure. In the proof above, we chose to define $h$ first by setting $h(x_n) = R^n_\alpha(0)$ with $x_n=R^n(0)$, but we could have chosen any sequence $x_n \in R^n(I)$.
\end{remark}

\bibliography{Borne_log}{}

\begin{thebibliography}{10}

\bibitem{athanassopoulos2015denjoy}
K.~Athanassopoulos.
\newblock Denjoy {$C^1$} diffeomorphisms of the circle and {McDuff's question}.
\newblock {\em Expositiones Mathematicae}, 33(1):48--66, 2015.

\bibitem{baladi2019there}
V.~Baladi.
\newblock {There are no deviations for the ergodic averages of
  Giulietti--Liverani horocycle flows on the two-torus}.
\newblock {\em Ergodic Theory and Dynamical Systems}, pages 1--14, 2019.

\bibitem{baladi2008dyndet}
V.~Baladi and M.~Tsujii.
\newblock Dynamical determinants and spectrum for hyperbolic diffeomorphisms.
\newblock In {\em Geometric and probabilistic structures in dynamics}, volume
  469 of {\em Contemp. Math.}, pages 29--68. Amer. Math. Soc., Providence, RI,
  2008.

\bibitem{bowen1977ue}
R.~Bowen and B.~Marcus.
\newblock Unique ergodicity for horocycle foliations.
\newblock {\em Israel J. Math.}, 26(1):43--67, 1977.

\bibitem{carrand2020ARPE}
J.~Carrand.
\newblock {Explicit construction of non-linear pseudo-Anosov maps, with
  nonminimal invariant foliations}.
\newblock {\em arXiv preprint arXiv:2104.11625}, 2021.

\bibitem{cornfeld2012ergodic}
I.~P. Cornfeld, S.~V. Fomin, and Y.~G. Sinai.
\newblock {\em Ergodic {T}heory}, volume 245.
\newblock Springer Science \& Business Media, 2012.

\bibitem{coudene2006pictures}
Y.~Coud{\`e}ne.
\newblock Pictures of hyperbolic dynamical systems.
\newblock {\em Notices of the AMS}, 53(1), 2006.

\bibitem{coudene2016attractor}
Y.~Coud{\`e}ne.
\newblock {\em Ergodic {T}heory and {D}ynamical {S}ystems}.
\newblock Springer, 2016.

\bibitem{dang2021}
N.~V. Dang and G.~Rivi\`ere.
\newblock Pollicott-{R}uelle spectrum and {W}itten {L}aplacians.
\newblock {\em J. Eur. Math. Soc. (JEMS)}, 23(6):1797--1857, 2021.

\bibitem{dyatlov2018dynzeta}
S.~Dyatlov and C.~Guillarmou.
\newblock Afterword: dynamical zeta functions for {A}xiom {A} flows.
\newblock {\em Bull. Amer. Math. Soc. (N.S.)}, 55(3):337--342, 2018.

\bibitem{forni2020equidistribution}
G.~Forni.
\newblock On the equidistribution of unstable curves for pseudo-{A}nosov
  diffeomorphisms of compact surfaces.
\newblock {\em arXiv preprint arXiv:2007.03144}, 2020.

\bibitem{furstenberg1973ue}
H.~Furstenberg.
\newblock The unique ergodicity of the horocycle flow.
\newblock In {\em Recent advances in topological dynamics ({P}roc. {C}onf.,
  {Y}ale {U}niv., {N}ew {H}aven, {C}onn., 1972; in honor of {G}ustav {A}rnold
  {H}edlund)}, pages 95--115. Lecture Notes in Math., Vol. 318, 1973.

\bibitem{GL2019}
P.~Giulietti and C.~Liverani.
\newblock Parabolic dynamics and {A}nisotropic {B}anach spaces.
\newblock {\em Journal of the European Mathematical Society}, 21(9):2793--2858,
  2019.

\bibitem{herman1979conjugaison}
M.~R. Herman.
\newblock Sur la conjugaison diff{\'e}rentiable des diff{\'e}omorphismes du
  cercle {\`a} des rotations.
\newblock {\em Publications Math{\'e}matiques de l'IH{\'E}S}, 49:5--233, 1979.

\bibitem{katok1997introduction}
A.~Katok and B.~Hasselblatt.
\newblock {\em Introduction to the modern theory of dynamical systems},
  volume~54 of {\em Encyclopedia of Mathematics and its Applications}.
\newblock Cambridge University Press, Cambridge, 1995.
\newblock With a supplementary chapter by Katok and Leonardo Mendoza.

\bibitem{marcus1975axiomadiffeo}
B.~Marcus.
\newblock Unique ergodicity of some flows related to {A}xiom {A}
  diffeomorphisms.
\newblock {\em Israel J. Math.}, 21(2-3):111--132, 1975.

\bibitem{marcus1975axiomaflow}
B.~Marcus.
\newblock Unique ergodicity of the horocycle flow: variable negative curvature
  case.
\newblock {\em Israel J. Math.}, 21(2-3):133--144, 1975.

\bibitem{smale1967differentiable}
S.~Smale.
\newblock Differentiable dynamical systems.
\newblock {\em Bulletin of the American Mathematical Society}, 73(6):747--817,
  1967.

\bibitem{yoccoz2005echanges}
J.-C. Yoccoz.
\newblock Echanges d'intervalles.
\newblock \textit{Cours Coll{\`e}ge de France},
  \url{https://www.college-de-france.fr/media/jean-christophe-yoccoz/UPL8726_yoccoz05.pdf},
  2005.

\end{thebibliography}
\bibliographystyle{abbrv}
\nocite{*}

\end{document}